\newtheorem{theorem}{Theorem}
\newtheorem{conjecture}{Conjecture}
\begin{document}

\date{}

\noindent \textbf{\LARGE New sizes of complete arcs in
$PG(2,q)$} \thispagestyle{fancyplain} \setlength\partopsep
{0pt} \flushbottom

\vspace*{5mm}

\noindent \textsc{Alexander A.
Davydov}\hfill\texttt{adav@iitp.ru}\newline {\small Institute
for Information Transmission Problems, Russian Academy of
Sciences,\\ Bol'shoi Karetnyi per. 19, GSP-4, Moscow, 127994,
Russia}\newline \textsc{Giorgio Faina} \hfill
\texttt{faina@dipmat.unipg.it}\newline \textsc{Stefano
Marcugini} \hfill \texttt{gino@dipmat.unipg.it}\newline
\textsc{Fernanda Pambianco}\hfill
\texttt{fernanda@dipmat.unipg.it}\newline {\small Dipartimento
di Matematica e Informatica, Universit\`{a} degli Studi di
Perugia, Via Vanvitelli~1, Perugia, 06123, Italy}

\medskip

\begin{center}
\parbox{11,8cm}{\footnotesize\textbf{Abstract.}
New upper bounds on the smallest size $t_{2}(2,q)$ of a
complete arc in the projective plane $PG(2,q)$ are obtained for
$853\leq q\leq 2879$ and $q=3511,4096$,
$4523,5003,5347,5641,5843,6011$. For $q\leq 2377$ and
$q=2401,2417,2437$, the relation $t_{2}(2,q)<4.5\sqrt{q}$
holds. The bounds are obtained by finding of new small complete
arcs with the help of computer search using randomized greedy
algorithms. Also new sizes of complete arcs are presented.}
\end{center}

\baselineskip=0.9\normalbaselineskip

\section{Introduction}

Let $PG(2,q)$ be the projective plane over the Galois field
$F_{q}$. An $n$-arc is a set of $n$ points no $3$ of which are
collinear. An $n$-arc is called complete if it is not contained
in an $(n+1)$-arc of $PG(2,q)$. Surveys of results on arcs can
be found in \cite{librohir,surveyhir}. In
\cite{surveyhir} the close relationship between the theory of
complete $n$-arcs, coding theory and mathematical statistics is
presented. In particular a complete arc in a plane $PG(2,q),$
points of which are treated as 3-dimensional $q$-ary columns,
defines a parity check matrix of a $q$-ary linear code with
codimension 3, Hamming distance 4 and covering radius 2. Arcs
can be interpreted as linear maximum distance separable (MDS)
codes and they are related to optimal coverings arrays
\cite{Hartman-Haskin} and to superregular matrices \cite{Keri}.

One of the main problems in the study of projective planes,
which is also of interest in Coding Theory, is the
determination of the spectrum of possible sizes of complete
arcs. Especially the problem of determining $t_{2}(2,q)$, the
size of the smallest complete arc in $PG(2,q)$, is interesting.

In Section 2 we give upper bounds on $t_{2}(2,q)$  for $853\leq
q\leq 2879$ and $q=3511,4096,4523,5003,5347,5641,5843,6011$.
These bounds are new for almost all $q$. For  $q\leq 2377$ and
$q=2401,2417,2437$, the relation $t_{2}(2,q)<4.5\sqrt{q}$
holds. For smaller $q$ slightly smaller bounds hold. The upper
bounds have been obtained by finding of new small complete arcs
with the help of the randomized greedy algorithms  described in
\cite[Sect.\thinspace 2]{DFMP-Plane},\cite[Sect.\thinspace
2]{DMP-JG2004}.

In Section 3 we present new sizes of complete arcs in $PG(2,q)$
 with \linebreak $169\le q\le 349$ and $q=1013,2003$.

\section{Small complete $k$-arcs in $PG(2,q)$, $
853\le q\le 2879$}

In the plane $PG(2,q)$, we denote  $\overline{t}_{2}(2,q)$ the
smallest \emph{known} size of complete arcs. For $q\le 841$,
the values of $\overline{t}_{2}(2,q)<4\sqrt{q}$  are collected
in \cite[Tab.\thinspace1]{DFMP-JG}.

In Tables 1 and 2, the values of
 $\overline{t}_{2}(2,q)$ for $853\leq
q\leq 2879$ and $q=3511,4096,4523,5003,5347,5641,5843,6011$ are
given. We denote\newline $ A_{q}=\left\lfloor
4.5\sqrt{q}-\overline{t}_{2}(2,q)\right\rfloor $, $B_{q}$ a
superior approximation of $\overline{t}_{2}(2,q)/\sqrt{q}$.
Also, $ C_{q}=\left\lfloor
5\sqrt{q}-\overline{t}_{2}(2,q)\right\rfloor $. For all $q$ in
Table~1 and $q=2401,2417,2437$ in Table 2, it holds that
$\overline{t} _{2}(2,q)<4.5 \sqrt{q}$.

In \cite{G}, complete $k$-arcs are obtained with $
k=4(\sqrt{q}-1)$, $q=p^{2}$ odd, $q\le 1681$ or $q=2401$. For
even $q=2^{h}$, $10\le h\le 15$, the smallest known sizes of
complete $k$ -arcs in $PG(2,q)$ are obtained in
\cite{DGMP-JCD}, see also \cite[p.\thinspace 35]{DFMP-JG}. They
are as follows: $ k=124,201,307,461,665,993,$ for
$q=2^{10},2^{11},2^{12},2^{13},2^{14},2^{15},$ respectively.
Also, $6{(\sqrt{q}-1)}${-arcs} in $\mathrm{PG}(2,q),$ $
q=4^{2h+1},$ are constructed in \cite{DGMP-Innov}; for $h\leq
4$ it is proved that they are complete. It gives a complete
3066-arc in $\mathrm{PG} (2,2^{18}).$ In Tables 1 and 2, we use
the results of \cite{G,DGMP-JCD} for
$q=31^{2},37^{2},41^{2},7^{4},2^{10},2^{11}$.

The rest of sizes $k$ for small complete arcs in Tables 1 and 2
is obtained in this work by computer search with the help of
the randomized greedy algorithms.

From Tables 1 and 2, we obtain Theorems \ref{Th_Plane_/} and
\ref{Th_Plane_-}.

\begin{theorem}
\label{Th_Plane_/} In $PG(2,q),$
\[
\begin{array}{ccl}
t_{2}(2,q) &<&4.5\sqrt{q}~\mbox{ for }q\leq 2377,\,q=2401,2417,2437; \\
t_{2}(2,q) &<&4.2\sqrt{q}~\mbox{ for }q\leq 1163,\,q=1181,1193,1369,1681,2401; \\
t_{2}(2,q) &<&4.3\sqrt{q}~\mbox{ for }q\leq
1451,\,q=1459,1471,1481,1483,1493,1499,1511,\\
&&\phantom{4.4\sqrt{q}~ \mbox{ for
}q\leq 1823,\, q=\,\,}1681,2401; \\
t_{2}(2,q) &<&4.4\sqrt{q}~\mbox{ for
}q\leq 1849,\,q=1867,1889,1901,1907,1913,1949,1993,\\
&&\phantom{4.4\sqrt{q}~ \mbox{ for }q\leq 1823,\, q=\,\,}2401.
\end{array}
\]
\end{theorem}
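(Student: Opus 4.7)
The proof is in essence a tabulation argument, so the plan is to reduce the theorem to an entry-by-entry inspection of Tables 1 and 2. First I would invoke the definitional inequality $t_{2}(2,q) \leq \overline{t}_{2}(2,q)$: since $\overline{t}_{2}(2,q)$ is the smallest \emph{known} size of a complete arc in $PG(2,q)$, any upper bound on $\overline{t}_{2}(2,q)$ is automatically an upper bound on $t_{2}(2,q)$. Thus it suffices to show, for each $q$ in each of the four listed ranges, that the entry $\overline{t}_{2}(2,q)$ recorded in Tables 1--2 satisfies the corresponding multiple of $\sqrt{q}$.

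Next I would exploit the auxiliary columns in the tables. The column $A_{q}=\lfloor 4.5\sqrt{q}-\overline{t}_{2}(2,q)\rfloor$ was designed precisely so that $A_{q}\geq 0$ is equivalent to $\overline{t}_{2}(2,q)<4.5\sqrt{q}$ (strict, because $\overline{t}_{2}$ is an integer and $4.5\sqrt{q}$ is generically irrational). So the first bullet of the theorem reduces to verifying $A_{q}\geq 0$ for every $q\leq 2377$ and for $q\in\{2401,2417,2437\}$. For the three tighter bounds $4.2\sqrt{q}$, $4.3\sqrt{q}$, $4.4\sqrt{q}$, I would use the column $B_{q}$, which is a superior approximation to $\overline{t}_{2}(2,q)/\sqrt{q}$; for any $q$ listed in the corresponding range it is enough to confirm $B_{q}\leq 4.2$, $4.3$ or $4.4$ respectively.

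The genuine content of the theorem is not combinatorial but constructive: one needs the arcs whose sizes populate the tables. These are of two kinds. For $q\in\{31^{2},37^{2},41^{2},7^{4},2^{10},2^{11}\}$ the arcs come from the algebraic constructions of \cite{G} and \cite{DGMP-JCD}; for all remaining $q$ in the tables, the arcs have been produced by the randomized greedy algorithms of \cite[Sect.~2]{DFMP-Plane} and \cite[Sect.~2]{DMP-JG2004}, as announced in the introduction. Completeness of each output arc is part of what the search certifies, so I would simply cite this and treat the tables as given.

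The expected obstacle is therefore not mathematical but bookkeeping: making sure every $q$ in each listed range appears in Tables 1--2 with the appropriate value, and that no $q$ in the stated intervals has been skipped. I would handle this by going range-by-range, checking endpoints and the explicit exceptional values $\{2401,2417,2437\}$, $\{1181,1193,1369,1681,2401\}$, $\{1459,\ldots,2401\}$, $\{1867,\ldots,2401\}$ against the tables, and noting that for the finitely many $q$ falling outside the contiguous intervals $q\leq 2377$, $q\leq 1163$, $q\leq 1451$, $q\leq 1849$ the corresponding $A_{q}$ or $B_{q}$ entry witnesses the claimed inequality. No further argument is required.
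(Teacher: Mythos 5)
Your reduction is exactly the paper's (essentially unwritten) proof: Theorem~\ref{Th_Plane_/} is read off the tabulated values of $\overline{t}_{2}(2,q)$, using $t_{2}(2,q)\le\overline{t}_{2}(2,q)$, the column $A_{q}\ge 0$ for the $4.5\sqrt{q}$ bound, and $B_{q}\le 4.2,\,4.3,\,4.4$ for the finer bounds (correctly strict, since the caption guarantees $B_{q}>\overline{t}_{2}(2,q)/\sqrt{q}$), with the tables themselves resting on the computer searches and on \cite{G,DGMP-JCD} for $q=31^{2},37^{2},41^{2},7^{4},2^{10},2^{11}$.

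There is, however, one concrete hole in the bookkeeping as you describe it: Tables 1 and 2 cover only $853\le q\le 2879$ (plus a few larger $q$), whereas all four statements of the theorem range over every prime power down to the smallest ones ($q\le 2377$, $q\le 1163$, $q\le 1451$, $q\le 1849$). For $q\le 841$ none of the required entries appears in this paper, so your planned check that ``every $q$ in each listed range appears in Tables 1--2'' would fail. The missing ingredient is the remark at the start of Section~2: for $q\le 841$ the known values satisfy $\overline{t}_{2}(2,q)<4\sqrt{q}$ and are collected in \cite[Tab.\thinspace 1]{DFMP-JG}, and $4\sqrt{q}$ is stronger than each of the four bounds claimed. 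Once you add that citation (and the trivial observation $4<4.2$), your verification criteria coincide with what the paper relies on when it states that Theorem~\ref{Th_Plane_/} is obtained from Tables 1 and 2.
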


\begin{theorem}
\label{Th_Plane_-} In $PG(2,q),$
\begin{center}
$
\begin{array}{ccl}
t_{2}(2,q) &<&4.5\sqrt{q}\quad \phantom{-1}\mbox{ for }q\leq 2377,\,q=2401,2417,2437;
\label{ARCS} \\
t_{2}(2,q) &<&4.5\sqrt{q}-10\mbox{ for }q\leq
1163,\,q=1181,1187,1193,1223,1237,1249,   \\
&&\phantom{4.5\sqrt{q}-10\mbox{ for }q\leq
1117,\,q=\,\,}1369,1681,2401;\\
t_{2}(2,q) &<&4.5\sqrt{q}-8\,~\mbox{ for }q\leq
1423,\,q=1429,1433,1439,1447,1451,1471,  \\
&&\phantom{4.5\sqrt{q}-8\,~ \mbox{ for } q\leq 1369,\, q=\,\,}
1481,1483,1499,1511,1681,2401;   \\
t_{2}(2,q) &<&4.5\sqrt{q}-6\,~\mbox{ for
}q\leq 1693,\, q=1699,1709,1747,1783,2401;   \\
t_{2}(2,q) &<&4.5\sqrt{q}-3\,~\mbox{ for }q\leq 2003,\, q=2017,2027,2401.
\end{array}
$
\end{center}
\end{theorem}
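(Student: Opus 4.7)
The plan is to prove Theorem~\ref{Th_Plane_-} by direct inspection of Tables~1 and 2, combined with the trivial bound $t_{2}(2,q) \leq \overline{t}_{2}(2,q)$. Each inequality in the statement has the form $\overline{t}_{2}(2,q) < 4.5\sqrt{q} - c$ for some $c \in \{0,3,6,8,10\}$, which, in the notation of Section~2, is equivalent to $A_{q} \geq c$. Since $A_{q}$ is tabulated, verification of each inequality reduces to reading an entry from the appropriate row.

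First I would group the listed values of $q$ by the applicable constant $c$, observing that the five claims are nested: any $q$ satisfying the $c=10$ bound also satisfies the $c \in \{8,6,3,0\}$ bounds. Consequently every $q$ need only be checked against the strongest inequality in which it is listed, and the weaker claims are obtained by transitivity over the ranges.

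Next I would run through each prime power $q$ in the specified ranges, locate the corresponding entry of Table~1 or Table~2, read off $\overline{t}_{2}(2,q)$ and $A_{q}$, and confirm $A_{q} \geq c$. The existence of a complete arc realizing $\overline{t}_{2}(2,q)$ is provided by the constructions of \cite{G} and \cite{DGMP-JCD} for $q \in \{31^{2},37^{2},41^{2},7^{4},2^{10},2^{11}\}$, and by the randomized greedy algorithm of \cite{DFMP-Plane,DMP-JG2004} for the remaining $q$, as reviewed earlier in Section~2.

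The main obstacle is not mathematical but computational: Tables~1 and 2 are the output of an extensive randomized search, and the credibility of the theorem rests on the correctness of every tabulated value. Once those values are accepted---i.e., once each constructed point set has been verified to be an arc (no three collinear) and to be complete (every external point lies on a bisecant)---Theorem~\ref{Th_Plane_-} follows by a routine case-by-case inspection, with no further combinatorial argument needed beyond the definition of $t_{2}(2,q)$.
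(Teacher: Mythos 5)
Your proposal is correct and is essentially the paper's own argument: the paper obtains Theorem~\ref{Th_Plane_-} exactly by reading $\overline{t}_{2}(2,q)$ (and $A_{q}$) from Tables~1 and~2 --- whose entries come from the randomized greedy searches together with \cite{G,DGMP-JCD} for the square and even orders --- and invoking $t_{2}(2,q)\le\overline{t}_{2}(2,q)$. The only nuance worth noting is that ``$A_{q}\ge c$'' yields the \emph{strict} inequality $\overline{t}_{2}(2,q)<4.5\sqrt{q}-c$ because $4.5\sqrt{q}$ is irrational for the non-square $q$ involved, and for the square orders in the lists the tabulated margin exceeds $c$, so no equality case arises.
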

Our methods allow us to obtain small arcs in
 $PG(2,q)$ for $q\le6011$, using our present computers.
We plan to write on these arcs sizes in a
 journal paper.

Let $c$ be a constant independent of $q$. Let
$t(\mathcal{P}_{q})$ be the size of the smallest complete arc
in any projective plane $\mathcal{P}_{q}$ of order $q$. In
\cite{KV}, for sufficiently large \newpage

Table 1. The smallest known sizes
$\overline{t}_{2}=\overline{t} _{2}(2,q)<4.5 \sqrt{q}$ of
complete arcs in planes $PG(2,q)$. $ A_{q}=\left\lfloor
4.5\sqrt{q}-\overline{t}_{2}(2,q)\right\rfloor$, $B_{q}>
\overline{t}_{2}(2,q)/ \sqrt{q}$
\begin{center}
$
\begin{array}{|r|c|c|c||c|c|r|c||c|c|r|c|}
\hline
q & \overline{t}_{2} & A_{q} & B_{q} & q & \overline{t}_{2} & A_{q} & B_{q}
& q & \overline{t}_{2} & A_{q} & B_{q} \\ \hline
 853 & 118 & 13 & 4.05 & 1087 & 137 & 11 & 4.16 & 1327 & 155 & 8 & 4.26 \\
 857 & 119 & 12 & 4.07 & 1091 & 138 & 10 & 4.18 & 1331 & 155 & 9 & 4.25 \\
 859 & 119 & 12 & 4.07 & 1093 & 138 & 10 & 4.18 & 1361 & 157 & 9 & 4.26 \\
 863 & 119 & 13 & 4.06 & 1097 & 138 & 11 & 4.17 & 1367 & 158 & 8 & 4.28 \\
 877 & 120 & 13 & 4.06 & 1103 & 138 & 11 & 4.16 & 1369 & 144 & 22& 3.90 \\
 881 & 121 & 12 & 4.08 & 1109 & 138 & 11 & 4.15 & 1373 & 158 & 8 & 4.27 \\
 883 & 121 & 12 & 4.08 & 1117 & 140 & 10 & 4.19 & 1381 & 159 & 8 & 4.28 \\
 887 & 121 & 13 & 4.07 & 1123 & 139 & 11 & 4.15 & 1399 & 160 & 8 & 4.28 \\
 907 & 123 & 12 & 4.09 & 1129 & 140 & 11 & 4.17 & 1409 & 160 & 8 & 4.27 \\
 911 & 123 & 12 & 4.08 & 1151 & 142 & 10 & 4.19 & 1423 & 161 & 8 & 4.27 \\
 919 & 124 & 12 & 4.10 & 1153 & 142 & 10 & 4.19 & 1427 & 162 & 7 & 4.29 \\
 929 & 125 & 12 & 4.11 & 1163 & 143 & 10 & 4.20 & 1429 & 161 & 9 & 4.26 \\
 937 & 126 & 11 & 4.12 & 1171 & 144 & 9  & 4.21 & 1433 & 161 & 9 & 4.26 \\
 941 & 126 & 12 & 4.11 & 1181 & 144 & 10 & 4.20 & 1439 & 161 & 9 & 4.25 \\
 947 & 127 & 11 & 4.13 & 1187 & 145 & 10 & 4.21 & 1447 & 162 & 9 & 4.26 \\
 953 & 127 & 11 & 4.12 & 1193 & 145 & 10 & 4.20 & 1451 & 163 & 8 & 4.28 \\
 961 & 120 & 19 & 3.88 & 1201 & 146 & 9  & 4.22 & 1453 & 164 & 7 & 4.31 \\
 967 & 128 & 11 & 4.12 & 1213 & 147 & 9  & 4.23 & 1459 & 164 & 7 & 4.30 \\
 971 & 128 & 12 & 4.11 & 1217 & 147 & 9  & 4.22 & 1471 & 164 & 8 & 4.28 \\
 977 & 129 & 11 & 4.13 & 1223 & 147 & 10 & 4.21 & 1481 & 164 & 9 & 4.27 \\
 983 & 129 & 12 & 4.12 & 1229 & 148 & 9  & 4.23 & 1483 & 165 & 8 & 4.29 \\
 991 & 130 & 11 & 4.13 & 1231 & 148 & 9  & 4.22 & 1487 & 166 & 7 & 4.31 \\
 997 & 130 & 12 & 4.12 & 1237 & 148 & 10 & 4.21 & 1489 & 166 & 7 & 4.31 \\
1009 & 132 & 10 & 4.16 & 1249 & 149 & 10 & 4.22 & 1493 & 166 & 7 & 4.30 \\
1013 & 131 & 12 & 4.12 & 1259 & 150 & 9  & 4.23 & 1499 & 166 & 8 & 4.29 \\
1019 & 132 & 11 & 4.14 & 1277 & 151 & 9  & 4.23 & 1511 & 166 & 8 & 4.28 \\
1021 & 132 & 11 & 4.14 & 1279 & 151 & 9  & 4.23 & 1523 & 168 & 7 & 4.31 \\
1024 & 124 & 20 & 3.88 & 1283 & 152 & 9  & 4.25 & 1531 & 169 & 7 & 4.32 \\
1031 & 132 & 12 & 4.12 & 1289 & 152 & 9  & 4.24 & 1543 & 169 & 7 & 4.31 \\
1033 & 133 & 11 & 4.14 & 1291 & 152 & 9  & 4.24 & 1549 & 170 & 7 & 4.32 \\
1039 & 134 & 11 & 4.16 & 1297 & 153 & 9  & 4.25 & 1553 & 170 & 7 & 4.32 \\
1049 & 134 & 11 & 4.14 & 1301 & 153 & 9  & 4.25 & 1559 & 170 & 7 & 4.31 \\
1051 & 135 & 10 & 4.17 & 1303 & 153 & 9  & 4.24 & 1567 & 171 & 7 & 4.32 \\
1061 & 135 & 11 & 4.15 & 1307 & 153 & 9  & 4.24 & 1571 & 171 & 7 & 4.32 \\
1063 & 136 & 10 & 4.18 & 1319 & 154 & 9  & 4.25 & 1579 & 172 & 6 & 4.33 \\
1069 & 136 & 11 & 4.16 & 1321 & 154 & 9  & 4.24 & 1583 & 172 & 7 & 4.33 \\
\hline
\end{array}
$
\end{center}

\newpage

Table 1 (continue). The smallest known sizes
$\overline{t}_{2}=\overline{t} _{2}(2,q)<4.5 \sqrt{q}$ of
complete arcs in planes $PG(2,q)$. $ A_{q}=\left\lfloor
4.5\sqrt{q}-\overline{t}_{2}(2,q)\right\rfloor$, $B_{q}>
\overline{t}_{2}(2,q)/ \sqrt{q}$

\begin{center}
$\renewcommand{\arraystretch}{0.95}
\begin{array}{|c|c|r|c||c|c|c|c||c|c|c|c|}
\hline
q & \overline{t}_{2} & A_{q} & B_{q} & q & \overline{t}_{2} & A_{q} & B_{q}
& q & \overline{t}_{2} & A_{q} & B_{q} \\ \hline
1597 & 173 & 6 & 4.33 & 1867 & 190 & 4 & 4.40 & 2129 & 206 & 1 & 4.47 \\
1601 & 173 & 7 & 4.33 & 1871 & 191 & 3 & 4.42 & 2131 & 206 & 1 & 4.47 \\
1607 & 174 & 6 & 4.35 & 1873 & 191 & 3 & 4.42 & 2137 & 206 & 2 & 4.46 \\
1609 & 174 & 6 & 4.34 & 1877 & 191 & 3 & 4.41 & 2141 & 206 & 2 & 4.46 \\
1613 & 174 & 6 & 4.34 & 1879 & 191 & 4 & 4.41 & 2143 & 207 & 1 & 4.48 \\
1619 & 174 & 7 & 4.33 & 1889 & 191 & 4 & 4.40 & 2153 & 207 & 1 & 4.47 \\
1621 & 174 & 7 & 4.33 & 1901 & 191 & 5 & 4.39 & 2161 & 207 & 2 & 4.46 \\
1627 & 175 & 6 & 4.34 & 1907 & 192 & 4 & 4.40 & 2179 & 209 & 1 & 4.48 \\
1637 & 176 & 6 & 4.35 & 1913 & 192 & 4 & 4.39 & 2187 & 209 & 1 & 4.47 \\
1657 & 177 & 6 & 4.35 & 1931 & 194 & 3 & 4.42 & 2197 & 208 & 2 & 4.44 \\
1663 & 177 & 6 & 4.35 & 1933 & 194 & 3 & 4.42 & 2203 & 209 & 2 & 4.46 \\
1667 & 177 & 6 & 4.34 & 1949 & 194 & 4 & 4.40 & 2207 & 210 & 1 & 4.48 \\
1669 & 177 & 6 & 4.34 & 1951 & 195 & 3 & 4.42 & 2209 & 210 & 1 & 4.47 \\
1681 & 160 & 24 &3.91 & 1973 & 196 & 3 & 4.42 & 2213 & 210 & 1 & 4.47 \\
1693 & 179 & 6 & 4.36 & 1979 & 196 & 4 & 4.41 & 2221 & 210 & 2 & 4.46 \\
1697 & 180 & 5 & 4.37 & 1987 & 197 & 3 & 4.42 & 2237 & 211 & 1 & 4.47 \\
1699 & 179 & 6 & 4.35 & 1993 & 196 & 4 & 4.40 & 2239 & 211 & 1 & 4.46 \\
1709 & 180 & 6 & 4.36 & 1997 & 198 & 3 & 4.44 & 2243 & 211 & 2 & 4.46 \\
1721 & 181 & 5 & 4.37 & 1999 & 198 & 3 & 4.43 & 2251 & 212 & 1 & 4.47 \\
1723 & 181 & 5 & 4.37 & 2003 & 198 & 3 & 4.43 & 2267 & 213 & 1 & 4.48 \\
1733 & 182 & 5 & 4.38 & 2011 & 199 & 2 & 4.44 & 2269 & 213 & 1 & 4.48 \\
1741 & 182 & 5 & 4.37 & 2017 & 199 & 3 & 4.44 & 2273 & 214 & 0 & 4.49 \\
1747 & 182 & 6 & 4.36 & 2027 & 199 & 3 & 4.43 & 2281 & 214 & 0 & 4.49 \\
1753 & 183 & 5 & 4.38 & 2029 & 200 & 2 & 4.45 & 2287 & 215 & 0 & 4.50 \\
1759 & 183 & 5 & 4.37 & 2039 & 201 & 2 & 4.46 & 2293 & 215 & 0 & 4.49 \\
1777 & 184 & 5 & 4.37 & 2048 & 201 & 2 & 4.45 & 2297 & 215 & 0 & 4.49 \\
1783 & 183 & 7 & 4.34 & 2053 & 201 & 2 & 4.44 & 2309 & 215 & 1 & 4.48 \\
1787 & 185 & 5 & 4.38 & 2063 & 202 & 2 & 4.45 & 2311 & 216 & 0 & 4.50 \\
1789 & 185 & 5 & 4.38 & 2069 & 202 & 2 & 4.45 & 2333 & 217 & 0 & 4.50 \\
1801 & 186 & 4 & 4.39 & 2081 & 203 & 2 & 4.45 & 2339 & 217 & 0 & 4.49 \\
1811 & 187 & 4 & 4.40 & 2083 & 203 & 2 & 4.45 & 2341 & 217 & 0 & 4.49\\
1823 & 187 & 5 & 4.38 & 2087 & 203 & 2 & 4.45 & 2347 & 218 & 0 & 4.50 \\
1831 & 188 & 4 & 4.40 & 2089 & 203 & 2 & 4.45 & 2351 & 218 & 0 & 4.50 \\
1847 & 189 & 4 & 4.40 & 2099 & 204 & 2 & 4.46 & 2357 & 218 & 0 & 4.50 \\
1849 & 189 & 4 & 4.40 & 2111 & 205 & 1 & 4.47 & 2371 & 218 & 1 & 4.48 \\
1861 & 190 & 4 & 4.41 & 2113 & 205 & 1 & 4.46 & 2377 & 219 & 0 & 4.50\\
\hline
\end{array}
$
\end{center}
\newpage
\noindent Table 2. The smallest known sizes
$\overline{t}_{2}=\overline{t} _{2}(2,q)<5 \sqrt{q}$ of
complete arcs in planes $PG(2,q)$. $ A_{q}=\left\lfloor
4.5\sqrt{q}-\overline{t}_{2}(2,q)\right\rfloor$, $B_{q}>
\overline{t}_{2}(2,q)/ \sqrt{q}$, $ C_{q}=\left\lfloor
5\sqrt{q}-\overline{t}_{2}(2,q)\right\rfloor $
\begin{center}
$
\begin{array}{|c|c|c|c|c||c|c|c|c||c|c|c|c|}
\hline
q & \overline{t}_{2} & A_{q} &C_{q}  &B_{q} &q & \overline{t}_{2} & C_{q}   & B_{q}
& q & \overline{t}_{2} & C_{q} & B_{q} \\ \hline
2381&220&  &23&4.51&2551&229&23&4.54&2713&237&23&4.56\\
2383&220&  &24&4.51&2557&229&23&4.53&2719&238&22&4.57\\
2389&220&  &24&4.51&2579&230&23&4.53&2729&238&23&4.56\\
2393&221&  &23&4.52&2591&231&23&4.54&2731&238&23&4.56\\
2399&221&  &23&4.52&2593&231&23&4.54&2741&239&22&4.57\\
2401&192&28&53&3.92&2609&232&23&4.55&2749&239&23&4.56\\
2411&221&  &24&4.51&2617&233&22&4.56&2753&239&23&4.56\\
2417&221&0 &24&4.50&2621&233&22&4.56&2767&241&22&4.59\\
2423&222&  &24&4.51&2633&232&24&4.53&2777&241&22&4.58\\
2437&222&0 &24&4.50&2647&234&23&4.55&2789&241&23&4.57\\
2441&223&  &24&4.52&2657&233&24&4.53&2791&242&22&4.59\\
2447&223&  &24&4.51&2659&233&24&4.52&2797&241&23&4.56\\
2459&224&  &23&4.52&2663&235&23&4.56&2801&242&22&4.58\\
2467&224&  &24&4.51&2671&236&22&4.57&2803&242&22&4.58\\
2473&225&  &23&4.53&2677&236&22&4.57&2809&242&23&4.57\\
2477&225&  &23&4.53&2683&236&22&4.56&2819&242&23&4.56\\
2503&227&  &23&4.54&2687&236&23&4.56&2833&243&23&4.57\\
2521&227&  &24&4.53&2689&236&23&4.56&2837&244&22&4.59\\
2531&227&  &24&4.52&2693&237&22&4.57&2843&244&22&4.58\\
2539&228&  &23&4.53&2699&237&22&4.57&2851&244&22&4.57\\
2543&228&  &24&4.53&2707&237&23&4.56&2857&245&22&4.59\\
2549&229&  &23&4.54&2711&237&23&4.56&2861&245&22&4.59\\
                            &&&&&&&&&2879&245&23&4.57\\                \hline
\end{array}
$
\end{center}

\noindent $q$, it is proved that $t(\mathcal{P}_{q})\le
\sqrt{q}\log^{c}q$, $c=300$. The logarithm basis is not noted
as the estimate is asymptotic. For definiteness, we use the
binary logarithms. We introduce $D_{q}(c)$ and
$\overline{D}_{q}(c)$ as follows:
$$
t_{2}(2,q)=D_{q}(c)\sqrt{q}\log_{2}^{c}q, ~~
\overline{t}_{2}(2,q)=\overline{D}_{q}(c)\sqrt{q}\log_{2}^{c}q.
$$

From Tables 1, 2 and \cite[Tab.\thinspace 1]{DFMP-JG}, we
obtain Observation~1.

\textbf{Observation 1.} Let $173\le q\le 2879$, $q\ne 5^{4},3^{6},29^{2},31^{2},2^{10},37^{2},41^{2},7^{4}$. Then\\
\textbf{(i)} $0.45>\overline{D}_{q}(1)>0.397$. Also,
$0.428>\overline{D}_{q}(1)$ if $467\le q$;
$0.415>\overline{D}_{q}(1)$ if $1013\le q$;
$0.41>\overline{D}_{q}(1)$ if $1399\le q$;
 $0.405>\overline{D}_{q}(1)$ if $1889\le
q$. So, $\overline{D}_{q}(1)$ has a tendency to decreasing.\\
\textbf{(ii)} $1.202<\overline{D}_{q}(\frac{1}{2})<1.355$.
Also, $\overline{D}_{q}(\frac{1}{2})<1.27$ if $q\le 443$;
$\overline{D}_{q}(\frac{1}{2})<1.32$ if $q\le 1291$;
$\overline{D}_{q}(\frac{1}{2})<1.325$ if $q\le 1327$;
 $\overline{D}_{q}(\frac{1}{2})<1.335$ if
$q\le 1801$. So, $\overline{D}_{q}(\frac{1}{2})$ has a tendency to increasing.\\
\textbf{(iii)} $0.720<\overline{D}_{q}(0.75)<0.743$. The values
of $\overline{D}_{q}(0.75)$ oscillate about the average value
$0.73331$. It holds that
\begin{eqnarray}\label{eq2_Dq75}
\begin{array}{cc}
 0.720<\overline{D}_{q}(0.75)<0.743 &\mbox{if }173\le q\le997 , \smallskip\\
0.727<\overline{D}_{q}(0.75)<0.741 &\mbox{ if }1009\le q\le1999,\smallskip\\
0.729<\overline{D}_{q}(0.75)<0.738 &\mbox{ if } 2003\le q\le2879 .\medskip
\end{array}
\end{eqnarray}
Moreover, let
$$
\widehat{t}_{2}(2,q)=0.73331\sqrt{q}\log_{2}^{0.75}q,\quad
\overline{\Delta}_{q}= \overline{t}_{2}(2,q) -\widehat{t}_{2}(2,q),\quad
\overline{P}_{q}=\frac{100\overline{\Delta}_{q}}{\overline{t}_{2}(2,q)}\%.
$$
It holds that
\begin{eqnarray}\label{eq2_dlt}
-1.86\le\overline{\Delta}_{q}\le1.23.
\end{eqnarray}

\begin{eqnarray}\label{eq2_percent}
\begin{array}{cc}
 -1.73\%<\overline{P}_{q}<1.31\% &\mbox{if }173\le q\le997 , \smallskip\\
-0.80\%<\overline{P}_{q}<0.93\% &\mbox{ if }1009\le q\le1999,\smallskip\\
-0.53\%<\overline{P}_{q}<0.54\% &\mbox{ if } 2003\le q\le2879 .\medskip
\end{array}
\end{eqnarray}

 In other words,
 $\widehat{t}_{2}(2,q)=0.73331\sqrt{q}\log_{2}^{0.75}$ can be treated as a
 \textbf{predicted} value of $t_{2}(2,q)$. Then
 $\overline{\Delta}_{q}$ is the difference between the smallest
 known size $\overline{t}_{2}(2,q)$ of complete arcs and the predicted value. Finally,
 $\overline{P}_{q}$ is this difference in percentage terms of
 the smallest known size.

By (\ref{eq2_dlt}),(\ref{eq2_percent}), the magnitude of the
difference $\overline{\Delta}_{q}$ is smaller than two. The
magnitude of the percentage value $\overline{P}_{q}$ is smaller
than two for $q<1000$ and smaller than one for $q>1000$. The
region of $\overline{P}_{q}$   is decreasing with growth
of~$q$. Also, by (\ref{eq2_Dq75}), the region of
$\overline{D}_{q}(0.75)$   is decreasing with growth of $q$.

The graphs of values of $\overline{D}_{q}(0.75)$,
$\overline{\Delta}_{q}$, and $\overline{P}_{q}$ are shown on
Figures 1-3.
\begin{figure}[h!]
\begin{center}
\epsfig{file=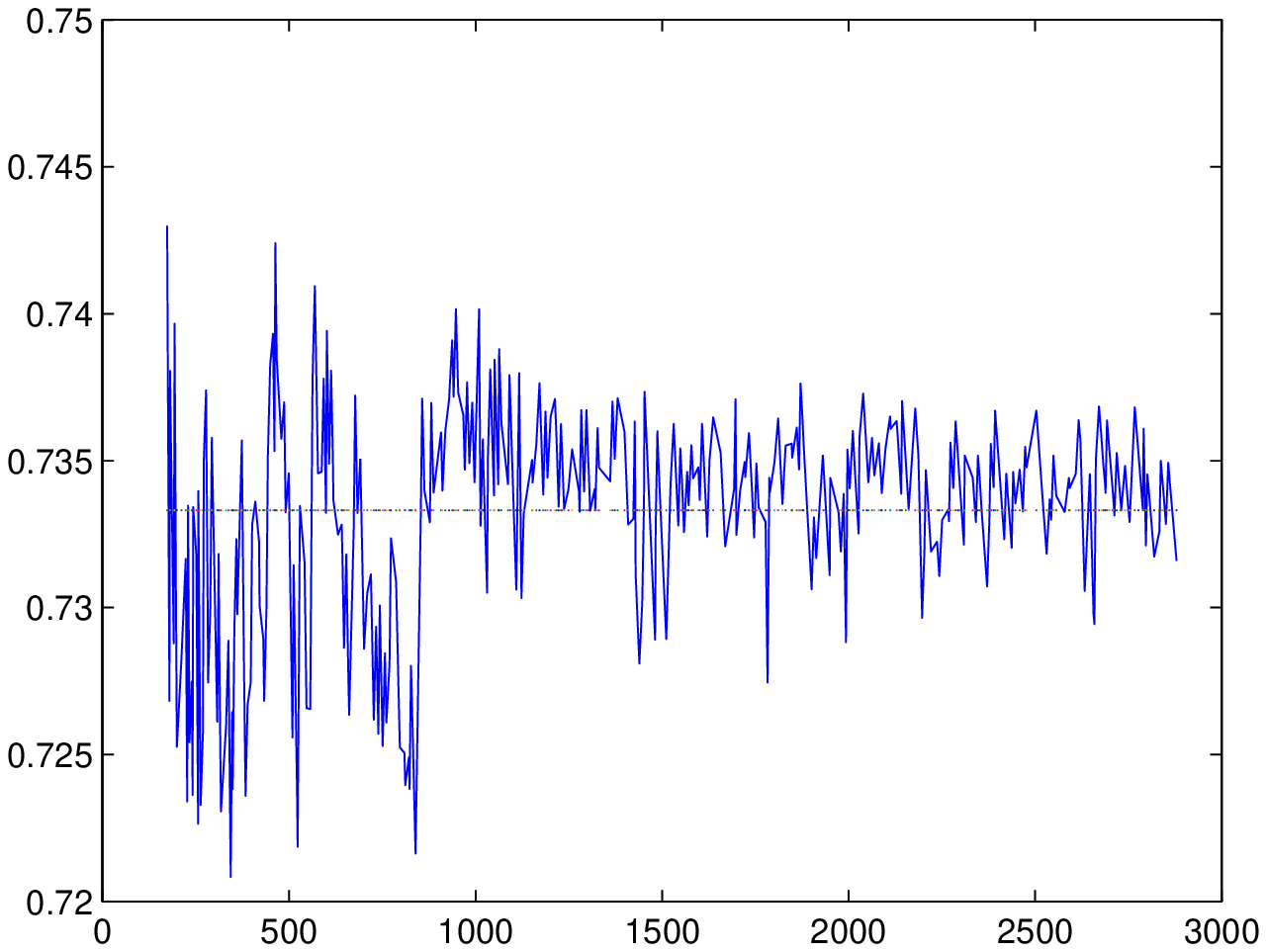,width=14.5cm}
\caption{The values of
$\overline{D}_{q}(0.75)=\frac{\overline{t}_{2}(2,q)}{\sqrt{q}\log_{2}^{0.75}q}$, $173\le q\le 2879$,
 $q\ne 5^{4},3^{6},29^{2},$ $31^{2},2^{10},37^{2},41^{2},7^{4}$}
\end{center}
\end{figure}

\begin{figure}[h!]
\begin{center}
\epsfig{file=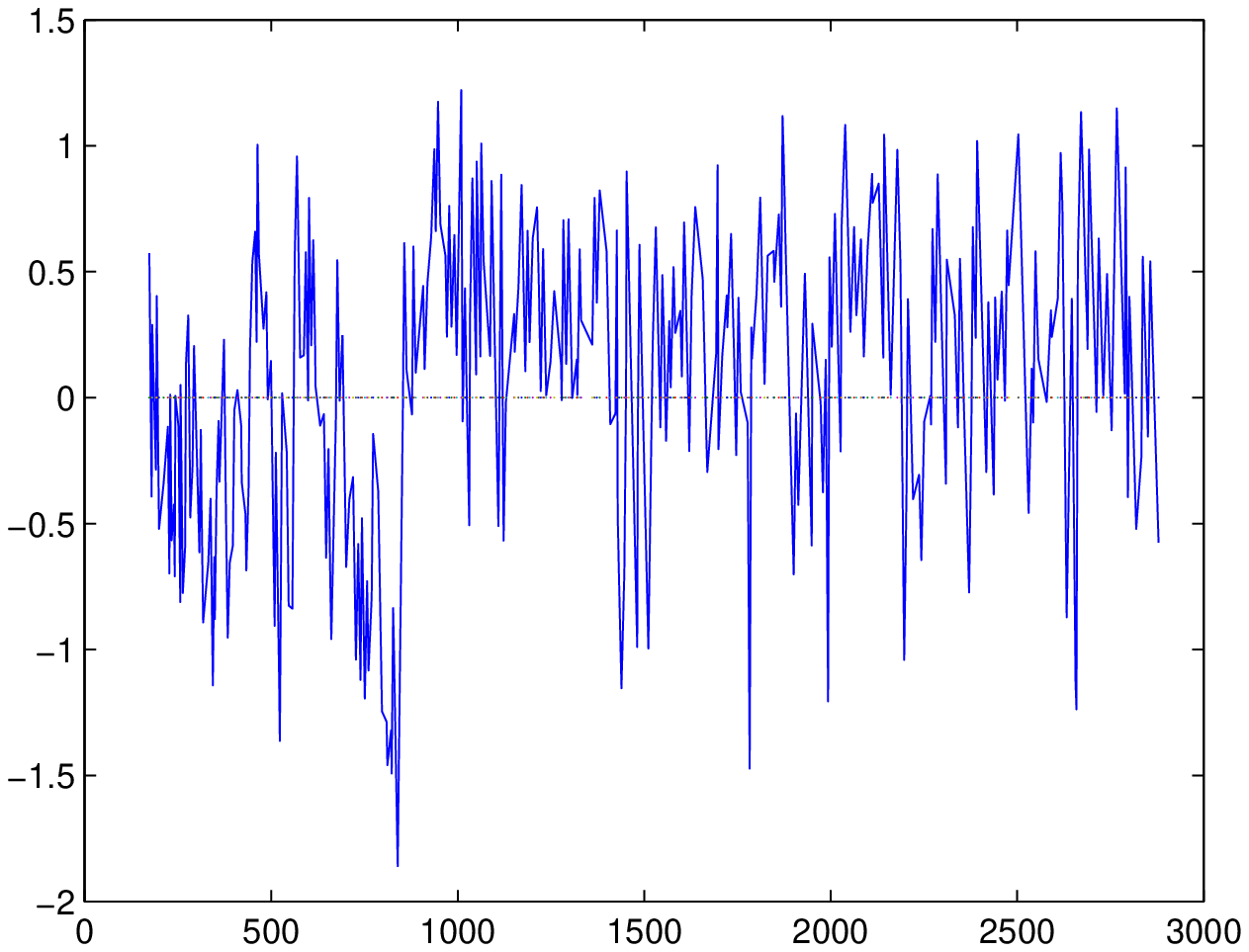,width=14.5cm}
\caption{The values of
$\overline{\Delta}_{q}= \overline{t}_{2}(2,q) -0.73331\sqrt{q}\log_{2}^{0.75}q $, $173\le q\le 2879$, \newline
 $q\ne 5^{4},3^{6},29^{2},31^{2},2^{10},37^{2},41^{2},7^{4}$}
\end{center}
\end{figure}

\begin{figure}[h!]
\begin{center}
\epsfig{file=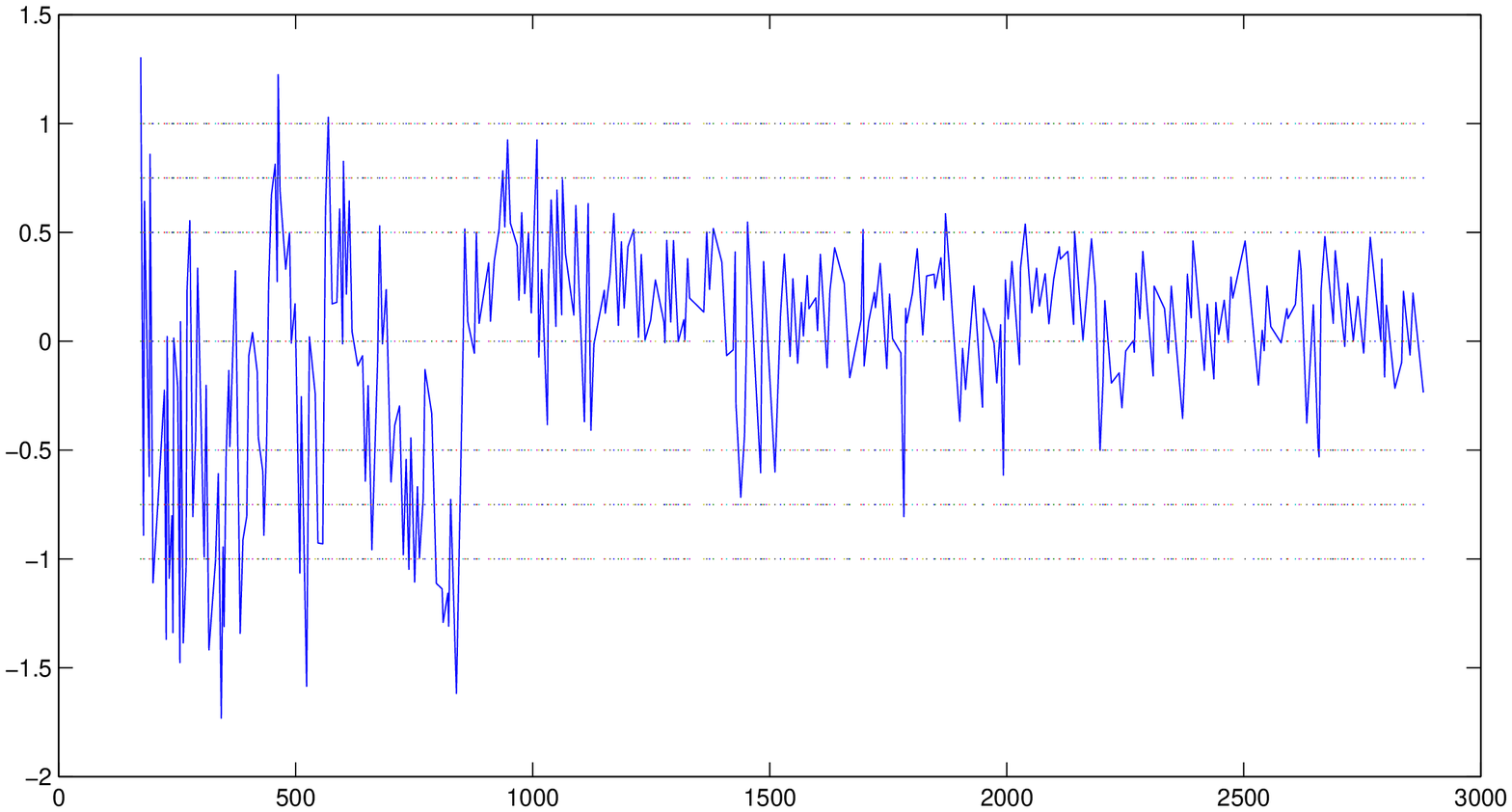,width=14.5cm}
\caption{The values of $\overline{P}_{q}=\frac{100\overline{\Delta}_{q}}{\overline{t}_{2}(2,q)}\%$, $173\le q\le 2879$,
  $q\ne 5^{4},3^{6},29^{2},31^{2},$ $2^{10},37^{2},41^{2},7^{4}$}
\end{center}
\end{figure}
Examples for great $q$ are given in Table 3.\medskip

\noindent Table 3. The smallest known sizes
$\overline{t}_{2}=\overline{t} _{2}(2,q)<5 \sqrt{q}$ of
complete arcs in planes $PG(2,q)$ with great $q$.  $B_{q}>
\overline{t}_{2}(2,q)/ \sqrt{q}$, $ C_{q}=\left\lfloor
5\sqrt{q}-\overline{t}_{2}(2,q)\right\rfloor $
\begin{center}
$\renewcommand{\arraystretch}{1.1}
\begin{array}{|@{\,}c@{\,}|@{\,}c@{\,}|@{\,}c@{\,}|@{\,}c@{\,}|@{\,}c@{\,}|@{\,}c@{\,}|
@{\,}c@{\,}||@{\,}c@{\,}|@{\,}c@{\,}|@{\,}c@{\,}|@{\,}c@{\,}|@{\,}c@{\,}|@{\,}c@{\,}|@{\,}c@{\,}|}
\hline
q & \overline{t}_{2} & C_{q}  &B_{q} &\overline{D}_{q}(1)&\overline{D}_{q}(\frac{1}{2})&\overline{D}_{q}(\frac{3}{4})&
q & \overline{t}_{2} & C_{q}  &B_{q} &\overline{D}_{q}(1)&\overline{D}_{q}(\frac{1}{2})&\overline{D}_{q}(\frac{3}{4})\\ \hline
3511&278&18&4.70&0.398&1.367&0.7380&5347&353&12&4.83&0.390&1.372&0.7312\\
4096&302&18&4.72&0.393&1.362&0.7319&5641&364&11&4.85&0.389&1.373&0.7307\\
4523&322&14&4.79&0.394&1.374&0.7360&5843&373&9&4.88&0.390&1.379&0.7335\\
5003&341&12&4.83&0.392&1.375&0.7345&6011&377&10&4.87&0.387&1.372&0.7291\\ \hline
\end{array}
$
\end{center}
The examples confirm Observation 1. So, along with $B_{q}$, the
values $\overline{D}_{q}(c)$, in particular with $c=0.75$, can
be useful for estimates of complete arcs sizes.

Note that a complete $302$-arc of Table 3 improves the result
of \cite{DGMP-JCD} for $q=2^{12}$.

From Tables 1-3 and \cite[Tab.\thinspace 1]{DFMP-JG}, we obtain
Theorem \ref{th2_Dq75}.
\begin{theorem}\label{th2_Dq75}
Let $173\le q\le 2879$ and $q=3511,4096$,
$4523,5003,5347,5641,$ $5843,6011$. Then
$$
t_{2}(2,q)<0.743\sqrt{q}\log_{2}^{0.75}q.
$$
\end{theorem}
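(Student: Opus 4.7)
The plan is to obtain this as an essentially immediate consequence of Observation~1(iii) combined with the upper bound tables already tabulated in the paper. Since by definition $t_{2}(2,q)\le\overline{t}_{2}(2,q)$, it suffices to establish the stronger inequality $\overline{t}_{2}(2,q)<0.743\sqrt{q}\log_{2}^{0.75}q$ for every $q$ in the stated range, equivalently $\overline{D}_{q}(0.75)<0.743$.

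First I would treat the range $173\le q\le 841$ by reading the values of $\overline{t}_{2}(2,q)$ directly from \cite[Tab.\thinspace 1]{DFMP-JG} and checking, for each exceptional $q\in\{5^{4},3^{6},29^{2},31^{2},2^{10},37^{2},41^{2},7^{4}\}$, that the specific arc sizes recorded there (which are smaller than the generic value because they come from algebraic constructions) still satisfy $\overline{t}_{2}(2,q)/(\sqrt{q}\log_{2}^{0.75}q)<0.743$; these algebraic cases are the ones not covered by the bounds summarized in Observation~1(iii), so they must be verified separately. Next, for $853\le q\le 2879$ I would invoke Observation~1(iii), specifically the three-line bound \eqref{eq2_Dq75}, which asserts $\overline{D}_{q}(0.75)<0.743$ on the whole interval. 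Finally, for $q\in\{3511,4096,4523,5003,5347,5641,5843,6011\}$ I would read the column $\overline{D}_{q}(\frac{3}{4})$ in Table~3 and observe that in every row the value lies strictly below $0.743$ (the largest listed entry is $0.7380$).

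Combining the three verifications gives $\overline{D}_{q}(0.75)<0.743$ for every $q$ listed in the theorem, and multiplying through by $\sqrt{q}\log_{2}^{0.75}q$ yields the claimed bound on $\overline{t}_{2}(2,q)$ and hence on $t_{2}(2,q)$.

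The proof is essentially a table lookup, so there is no genuine mathematical obstacle; the only point requiring care is the handling of the exceptional prime powers excluded from Observation~1(iii). For these one cannot appeal to the statistical bound on $\overline{D}_{q}(0.75)$ and must instead confirm the inequality case by case from the explicit arc sizes (e.g.\ $k=120$ for $q=961$, $k=124$ for $q=2^{10}$, $k=144$ for $q=1369$, $k=160$ for $q=1681$, $k=192$ for $q=2401$, and the corresponding values for $5^{4},3^{6},29^{2},7^{4}$ from the cited sources), all of which are actually smaller than the generic bound and therefore pose no problem.
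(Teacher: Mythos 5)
Your proposal is correct and matches the paper's own justification, which is simply ``From Tables 1--3 and \cite[Tab.\thinspace 1]{DFMP-JG}, we obtain Theorem \ref{th2_Dq75}'' -- i.e.\ a direct table verification of $\overline{t}_{2}(2,q)<0.743\sqrt{q}\log_{2}^{0.75}q$ for all listed $q$, including the exceptional square/power values whose (smaller) arc sizes appear in the same tables. Your routing through Observation 1(iii) plus a separate check of the excluded $q$ is just a slightly more structured presentation of the same lookup, so no substantive difference.
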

Taking into account (\ref{eq2_Dq75}) and Table 3, we assume
that the following upper bound on the smallest size
$t_{2}(2,q)$ of complete arc in the plane $PG(2,q)$ holds.
\begin{conjecture}
It holds that
$$
t_{2}(2,q)<0.75\sqrt{q}\log_{2}^{0.75}q,\quad 173\le q.
$$
\end{conjecture}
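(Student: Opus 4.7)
The plan is to split the range $q\ge 173$ into a finite computationally verified band and an asymptotic tail, attacking each with different tools. For $173\le q\le Q_{0}$, where $Q_{0}$ is a threshold chosen large enough that asymptotic estimates begin to take over, the approach is to extend the randomized greedy search of \cite{DFMP-Plane,DMP-JG2004} that produced Tables~1--3 and \cite[Tab.\thinspace 1]{DFMP-JG}. Theorem~\ref{th2_Dq75} already certifies the sharper bound $\overline{t}_{2}(2,q)<0.743\sqrt{q}\log_{2}^{0.75}q$ for $173\le q\le 2879$ and for the sporadic larger values listed there; the remaining computational task is to fill in every $q\le Q_{0}$ and verify that the looser bound $0.75\sqrt{q}\log_{2}^{0.75}q$ continues to hold. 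Since the percentage margin $\overline{P}_{q}$ in (\ref{eq2_percent}) shrinks with $q$, the absolute slack between the conjectured bound and $\overline{t}_{2}(2,q)$ grows, so this band can in principle be pushed arbitrarily far with more CPU time.

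For the asymptotic tail $q>Q_{0}$ one needs a probabilistic analysis of a randomized greedy algorithm, in the spirit of \cite{KV}. The idea is to add points to the arc one at a time, chosen uniformly at random from the set of points of $PG(2,q)$ not yet lying on a secant of the current partial arc, and to track the size $N_{i}$ of this ``free'' set after $i$ steps by a differential-equation style argument. A heuristic calculation gives $N_{i}\approx q^{2}\exp(-i^{2}/q)$, so that the process should exhaust the free points near $i\sim\sqrt{q\log q}$, with the precise exponent of the logarithm depending on how tightly one controls the terminal ``saturation'' phase, when few free points remain. Coupling this with martingale concentration (Azuma--Hoeffding or Freedman) would, in principle, yield an asymptotic bound of the form $\sqrt{q}\log_{2}^{c}q$ for a specific constant $c$.

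The main obstacle is exactly this saturation phase. Once $N_{i}$ becomes small, the expected one-step decrement is comparable to its fluctuation, so naive concentration arguments have to absorb a polylogarithmic safety factor; it is this phenomenon that forces the exponent $c=300$ in \cite{KV}. Pushing $c$ down to $0.75$ would require either a structural input specific to $PG(2,q)$ ruling out the worst local configurations during saturation (for instance, a quantitative pseudorandomness statement about the distribution of secants of a nearly complete arc), or an explicit algebraic construction along the lines of \cite{G,DGMP-JCD,DGMP-Innov}, which already attain the conjectured order but only on thin subfamilies (squares, specific prime powers) and not uniformly in $q$. A realistic intermediate target is therefore a conditional theorem: if the randomized greedy algorithm on $PG(2,q)$ satisfies a suitable pseudorandomness hypothesis on its intermediate uncovered sets, then the conjectured bound holds for all sufficiently large $q$, and the remaining finite range is disposed of by the computational band above.
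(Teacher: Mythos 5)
The statement you are asked about is a \emph{conjecture} in the paper, not a theorem: the authors do not prove it, they only support it empirically, via the computed values $\overline{D}_{q}(0.75)$ in (\ref{eq2_Dq75}), the bounds (\ref{eq2_dlt}) and (\ref{eq2_percent}), Table~3, and Theorem~\ref{th2_Dq75}, which together show that the smallest \emph{known} complete arcs satisfy $\overline{t}_{2}(2,q)<0.743\sqrt{q}\log_{2}^{0.75}q$ for the finitely many $q$ they searched. Your proposal is an honest research program, but it is not a proof, and you in effect concede this yourself. The computational band $173\le q\le Q_{0}$ can only ever certify finitely many values of $q$, while the conjecture quantifies over all prime powers $q\ge 173$; and the observation that the slack $\overline{P}_{q}$ appears to shrink is an empirical trend from the tables, not something that can be extrapolated rigorously. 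So the entire burden falls on the asymptotic tail.

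For that tail, the only rigorous general result available is the one cited in the paper, $t(\mathcal{P}_{q})\le\sqrt{q}\log^{c}q$ with $c=300$ \cite{KV}, and your sketch correctly identifies the obstruction: the saturation phase of the random greedy process, where the number of free points is small and concentration arguments lose polylogarithmic factors. But naming the obstruction is not the same as overcoming it. Your proposed remedies --- a quantitative pseudorandomness statement about secant distributions of nearly complete arcs, or a uniform algebraic construction extending \cite{G,DGMP-JCD,DGMP-Innov} beyond square and special prime-power orders --- are precisely the missing ingredients, and neither is supplied; the ``conditional theorem'' you fall back on is explicitly conditional on an unproven hypothesis. The heuristic $N_{i}\approx q^{2}\exp(-i^{2}/q)$ also cannot by itself distinguish an exponent $0.75$ from any other small exponent, since the exponent is determined exactly by the uncontrolled terminal phase. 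In short, there is a genuine gap --- indeed the gap \emph{is} the open problem --- and the correct reading of the paper is that this bound is conjectural, motivated by the remarkably stable behavior of $\overline{D}_{q}(0.75)$ in the computed range, with no proof claimed for any infinite family of $q$.
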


\section{On the spectrum of possible sizes of complete arcs in $PG(2,q)$}
Let $m_{2}(2,q)$ be the greatest size of complete arcs in
$PG(2,q)$. For odd $q$, $m_{2}(2,q)=q+1$. For even $q$,
$m_{2}(2,q)=q+2$.   For $q=p^{2}$
 there is the complete $(q-\sqrt{q}+1)$-arc \cite{surveyhir}.
 For $q$ odd there is a complete $\frac{1}{2}(q+5)$-arc \cite{KorchSon2010}.
 For $q\equiv 2$ $(\bmod~3)$ odd, $11\le q\leq 3701$  \cite{DFMP-JG}, and for $q\equiv 1$
$(\bmod$~$4),$  $q\leq 337$ \cite{Giordano}, there is a
complete $\frac{1}{2}(q+7)$-arc. For even $q\ge8$ there is a
complete $\frac{1}{2}(q+4)$-arc \cite{librohir}.

For even $q$, let $M_{q}=\frac{1}{2}(q+4)$. For odd $q$, let
$M_{q}=\frac{1}{2}(q+7)$ if either $q\equiv 2$ $(\bmod~3)$,
$11\le q\leq 3701$, or $q\equiv 1$ $(\bmod$~$4)$, $q\leq 337$.
Else, $M_{q}=\frac{1}{2}(q+5)$.

Below we suppose that $\overline{t}_{2}(2,q)$ is given in
\cite[Tab.\thinspace 1]{DFMP-JG} for $q\le 841$, $q\neq343$,
and in Tables 1 and 2 of this paper for $853\le q\le 2879$.
Also, in this work we have obtained the value
$\overline{t}_{2}(2,343)=66$ that improves the result of
\cite{DFMP-JG}.

\begin{theorem}\label{Th_spectrum}
In $PG(2,q)$ with $25\le q\le 251$, $257\le q\le 349$, and
$q=1013,$ $2003$, there are complete $k$-arcs of \textbf{all}
the sizes in the region\\
 \centerline{$ \overline{t}_{2}(2,q)\le
k\le M_{q}.$}

\noindent In $PG(2,256)$ there are complete $k$-arcs of sizes
$k=55-123,130,241,258$.
\end{theorem}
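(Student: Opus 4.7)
The statement is essentially an empirical spectrum claim: for each of the listed $q$, every integer $k$ in $[\overline{t}_{2}(2,q),M_{q}]$ must be realized by a complete arc. The proof is therefore constructive and relies on the randomized greedy machinery already introduced in Section~2, together with the endpoint constructions cited at the start of Section~3. My plan has three stages.

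First, I would secure the two endpoints of each interval. The lower endpoint $\overline{t}_{2}(2,q)$ is realized by the arcs tabulated in \cite[Tab.\thinspace 1]{DFMP-JG} for $q\le 841$ (augmented by the new $66$-arc for $q=343$ announced just before the statement) and by Tables~1–2 above for $853\le q\le 2879$, so in particular for $q=1013$ and $q=2003$. The upper endpoint $M_{q}$ is furnished by the constructions recalled at the beginning of the section: the $\tfrac{1}{2}(q+4)$-arc of \cite{librohir} for even $q$, the $\tfrac{1}{2}(q+7)$-arc of \cite{DFMP-JG,Giordano} when the residue conditions hold, and otherwise the $\tfrac{1}{2}(q+5)$-arc of \cite{KorchSon2010}.

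Second, for each intermediate size $k$ with $\overline{t}_{2}(2,q)<k<M_{q}$ I would produce at least one complete $k$-arc by combining two procedures. (a) \emph{Forward greedy}: run the randomized greedy algorithms of \cite[Sect.\thinspace 2]{DFMP-Plane} and \cite[Sect.\thinspace 2]{DMP-JG2004} with many independent seeds; each run outputs a complete arc, and aggregating the output sizes populates a broad subinterval concentrated near $\overline{t}_{2}(2,q)$. (b) \emph{Deletion plus completion}: start from a known complete arc $\mathcal{K}$ of some larger size $k'$, delete a small subset of its points, and re-run greedy completion on the remainder; varying the deletion pattern lets one sweep the spectrum downward from $k'$. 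By alternating the two procedures, and in particular by seeding (b) with the $M_{q}$-arc, one reaches sizes in the upper part of the interval that forward greedy never visits. Each resulting candidate arc is certified by verifying that no three of its points are collinear and that every point of $PG(2,q)$ outside it lies on a secant; both checks are straightforward.

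Third, for $PG(2,256)$ one carries out the same procedures, reports every size actually attained, and stops — the sparser list $55$–$123,130,241,258$ indicates exactly what the search was able to certify. The main obstacle is the two large cases $q=1013,2003$: the interval $[\overline{t}_{2}(2,q),M_{q}]$ contains several hundred integers, the forward-greedy distribution is sharply concentrated near $\overline{t}_{2}(2,q)$, and the deletion-plus-completion procedure tends to produce large jumps in the attained size, so the middle of the spectrum is the hardest to fill and demands targeted, seed-by-seed experimentation. That the spectrum is nevertheless complete for these two isolated $q$ (in contrast to $q=256$) is the substantive content of the theorem, and the bulk of the work is devoted to closing those last gaps.
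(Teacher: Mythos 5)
Your proposal is correct in spirit and takes essentially the same route as the paper: the theorem is established computationally, with the endpoints coming from the tabulated $\overline{t}_{2}(2,q)$ values and the known $M_{q}$-constructions, and the intermediate sizes produced by the randomized greedy algorithms of \cite{DFMP-Plane,DMP-JG2004} run with varied (new) starting conditions, each output being verified for completeness. The only difference is organizational: for $25\le q\le 167$ the paper simply cites the existing spectra in \cite[Tab.\thinspace 2]{DFMP-Plane} and \cite[Tab.\thinspace 2]{DFMP-JG} rather than recomputing, and it does not spell out the seeding strategy (your deletion-plus-completion heuristic is a plausible guess at the ``new approach to starting conditions and data,'' whose description the authors defer to a journal paper).
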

\begin{proof}
For $25\le q\le 167$ the assertion follows from
\cite[Tab.\thinspace2]{DFMP-Plane} and \cite[Tab.\thinspace
2]{DFMP-JG}. For $169\le q\le 349$ and $q=1013,2003$, all the
results are obtained in this work by the randomized greedy
algorithms of \cite{DFMP-Plane,DMP-JG2004} with a new approach
to creation of starting conditions and data.
\end{proof}
\begin{conjecture}
Let $353\le q\le 2879$ be an odd prime. Then in $PG(2,q)$ there
are complete $k$-arcs of \textbf{all} the sizes in the region
$\overline{t}_{2}(2,q)\le k\le M_{q}$. Moreover, complete
$k$-arcs with $\overline{t}_{2}(2,q)\le k\le \frac{1}{2}(q+5)$
can be obtained by the randomized greedy algorithms of
\emph{\cite{DFMP-Plane,DMP-JG2004} } with a new approach to
creation of starting data.
\end{conjecture}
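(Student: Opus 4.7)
The plan is to extend the computational strategy used in the proof of Theorem~\ref{Th_spectrum} (which settled the cases $25\le q\le 349$, $q=1013,2003$) to every odd prime $q$ in the interval $[353,2879]$. For each such $q$ I would run the randomized greedy algorithms of \cite{DFMP-Plane,DMP-JG2004}, with the ``new approach to creation of starting conditions and data'' that underlies Theorem~\ref{Th_spectrum}, and seek complete arcs of every integer size $k$ in the window $\overline{t}_{2}(2,q)\le k\le \tfrac{1}{2}(q+5)$. The outer endpoints of the full window $[\overline{t}_{2}(2,q),M_{q}]$ are already in hand: the lower end $\overline{t}_{2}(2,q)$ is realised by Tables~1--2, while the near-maximal sporadic sizes $\tfrac{1}{2}(q+5)$ and (under the stated congruence conditions) $\tfrac{1}{2}(q+7)$ are furnished by the constructions of \cite{KorchSon2010,DFMP-JG,Giordano}; together with $M_{q}$ itself these populate the top of the spectrum. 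Thus the substantive task is to close the long interior range.

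To hit a prescribed target size $k$ inside this interior range, I would use two complementary devices. First, a ``bottom-up'' run: seed the greedy algorithm with a carefully chosen short partial arc and let it grow until no point can be added without creating a collinear triple, then check completeness; by varying the seed one biases the terminal size so that every integer in a whole sub-interval is eventually attained. Second, a ``top-down'' run: start from an already-known complete arc of size slightly larger than $k$ (for instance one descending from a conic-based construction or from a sporadic large arc), delete a few points, and rerun the greedy completion; this is the natural way to access sizes just below the sporadic values $\tfrac{1}{2}(q+7),\tfrac{1}{2}(q+5)$. Both procedures must be iterated with many random restarts for every pair $(q,k)$, and completeness must be verified exhaustively at the end.

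The hard part is purely computational. There are several hundred odd primes in $[353,2879]$, and for each of them one must produce $\Theta(q)$ distinct complete arcs of pairwise different sizes, each typically requiring many thousands of randomized trials. The per-step cost of the greedy algorithm grows like $q^{2}$, so the largest primes already push against the limit of present hardware, as the authors themselves acknowledge when they defer the cases $q>2879$ to future work. The genuine obstacle, however, is not raw CPU but the fact that naive randomized greedy runs concentrate their terminal sizes near the typical value $\overline{t}_{2}(2,q)$ and only rarely land on specific target sizes far from either endpoint; hitting every integer in the middle of the spectrum demands a carefully engineered library of starting configurations, and it is precisely this calibration---rather than any theoretical gap---that currently prevents upgrading the conjecture to a theorem for the whole range.
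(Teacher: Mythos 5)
This statement is a \emph{conjecture} in the paper: the authors give no proof of it, and indeed they explicitly defer the matter, remarking only that their methods are applicable with present computers for $q\le 5171$ and that fuller results and the ``new approach to creation of starting conditions and data'' will appear in a journal paper. So there is no proof in the paper to compare yours against, and your text should not be read as one either: what you have written is a description of a computational programme (the same programme the authors themselves intend --- randomized greedy algorithms of \cite{DFMP-Plane,DMP-JG2004} with engineered starting configurations, extending the computations that established Theorem~\ref{Th_spectrum} for $25\le q\le 349$ and $q=1013,2003$), not an argument that establishes the statement.

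The concrete gap is the one you yourself name in your final paragraph: nothing in the plan guarantees that, for each odd prime $353\le q\le 2879$ and each integer $k$ with $\overline{t}_{2}(2,q)\le k\le M_{q}$, the greedy procedure actually terminates in a complete arc of size exactly $k$; the only way to certify this is to run the searches and verify completeness case by case, and those computations have not been carried out (which is precisely why the paper states the claim as a conjecture rather than a theorem). There is also a smaller inaccuracy at the top of the range: the constructions of \cite{KorchSon2010,DFMP-JG,Giordano} supply the isolated sizes $\frac{1}{2}(q+5)$ and, under the congruence conditions, $\frac{1}{2}(q+7)=M_{q}$, but they do not ``populate the top of the spectrum'' beyond those single values, so even the sizes between $\frac{1}{2}(q+5)$ and $M_{q}$ are not in hand a priori. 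In short, your proposal is a faithful restatement of the authors' intended method and of why the statement is plausible, but it is not, and cannot without the completed computations be, a proof.
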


Our methods are applicable using our present computers
 for $q\le5171$. For reason of space we plan to write more complete
results and to describe this new approach to creation of
starting conditions and data in a journal paper.

\end{document}